\documentclass[12pt,a4paper]{article}
\usepackage{latexsym,amssymb,amsfonts,amsmath,amsthm,nccmath,float,enumitem,setspace,bm,authblk,tabularx,longtable}
\usepackage[usenames,dvipsnames]{xcolor}
\usepackage[hidelinks]{hyperref}
\usepackage[margin=2cm]{geometry}

\hypersetup{
	colorlinks,
	linkcolor={red!80!black},
	citecolor={blue!80!black},
	urlcolor={blue!80!black}
}

\newtheorem{Theorem}{Theorem}[section]

\newtheorem{Lemma}{Lemma}[section]

\newtheorem{Definition}{Definition}

\def \leq {\leqslant}
\def \geq {\geqslant}

\let\oldproofname=\proofname
\renewcommand{\proofname}{\rm\bf{\oldproofname}}

\begin{document}

\title{The maximum size of the partial ground set of skew Bollob\'{a}s systems}
\author[a]{Yu Fang}
\author[b,c]{Tao Feng}
\author[a]{Xiaomiao Wang}
\affil[a]{School of Mathematics and Statistics, Ningbo University, Ningbo 315211, P. R. China}
\affil[b]{School of Mathematics and Statistics, Beijing Jiaotong University, Beijing 100044, P. R. China}
\affil[c]{Hebei Provincial Key Laboratory of Mathematical Theory and Analysis for Network and Data Science, Beijing Jiaotong University, Beijing, 100044, P. R. China}
\affil[ ]{fangymath@163.com; tfeng@bjtu.edu.cn; wangxiaomiao@nbu.edu.cn}
\renewcommand*{\Affilfont}{\small\it}
\renewcommand\Authands{ and }
\date{}

\maketitle

\footnotetext{Supported by NSFC under Grant 12271023 (T. Feng), and Ningbo Natural Science Foundation under Grant 2024J018 (X. Wang).}

\maketitle

\begin{abstract}
A skew Bollob\'{a}s system $\mathcal{P}=\{(A_i,B_i):1\leq i\leq m\}$ is a collection of pairs of disjoint subsets of $[n]$ such that $A_i\cap B_j\ne\emptyset$ for any $1\leq i<j\leq m$. Denote by $S_1(a, b)$ or $S_2(a, b)$ the maximum size of $\bigcup_{i=1}^m A_i$ or $\bigcup_{i=1}^m B_i$, respectively, over all possible skew Bollob\'{a}s systems $\mathcal{P}=\{(A_i,B_i):1\leq i\leq m\}$ satisfying $|A_i| \leq a$ and $|B_i| \leq b$ for all $i \in [m]$. It is shown that for any non-negative integers $a$ and $b$, $S_1(a,b)=\binom{a+b+1}{a}-1$ and $S_2(a,b)=\binom{a+b+1}{a+1}-1$.
\end{abstract}

\noindent {\bf Keywords}: skew Bollob\'{a}s system; maximum size.

\section{Introduction}

Throughout this paper, for a positive integer $n$, write $[n]=\{1,\dots,n\}$.	
\begin{Definition}
	Let $\mathcal{P}=\{(A_i,B_i):1\leq i\leq m\}$, where $A_i,B_i\subseteq [n]$ and $A_i\cap B_i=\emptyset$ for all $i\in[m]$. Then $\mathcal{P}$ is called a \emph{skew Bollob\'{a}s system} if $A_i\cap B_j\ne\emptyset$ for any $1\leq i<j\leq m$.
\end{Definition}

A skew Bollob\'{a}s system is also called a \emph{skew intersecting set pair system} (cf. \cite{gp}). Using linear algebraic methods, Frankl \cite{frankl} and Kalai \cite{kalai} gave the upper bound for the size of skew Bollob\'{a}s systems.

\begin{Theorem}{\rm \cite{frankl,kalai}}\label{skew-number}
Let $\mathcal{P}=\{(A_i,B_i):1\leq i\leq m\}$ be a skew Bollob\'{a}s system with $|A_i|\leq a$ and $|B_i|\leq b$ for every $i\in[m]$. Then
$$m\leq\binom{a+b}{a}.$$
\end{Theorem}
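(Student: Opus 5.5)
We prove Theorem~\ref{skew-number} by the exterior algebra (general position) method of Frankl and Kalai. First, we may assume $|A_i|=a$ and $|B_i|=b$ for all $i$. To arrange this, enlarge the ground set by adding new elements. For each $i$, add $a-|A_i|$ fresh elements to $A_i$ and $b-|B_i|$ fresh elements to $B_i$, all distinct from each other and from $[n]$. Disjointness of $A_i$ and $B_i$ is preserved, and for $i<j$ the old witness of $A_i\cap B_j\neq\emptyset$ survives. So we obtain a skew system of the same size $m$ with uniform sizes, on a ground set $[N]$.

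Next, choose vectors $v_1,\dots,v_N\in\mathbb{R}^{a+b}$ in general position, meaning any $a+b$ of them are linearly independent; the moment curve $v_k=(1,t_k,\dots,t_k^{a+b-1})$ with distinct $t_k$ works. For a set $S=\{s_1<\dots<s_r\}$ put $v_S=v_{s_1}\wedge\cdots\wedge v_{s_r}$. Let $x_i=v_{A_i}\in\bigwedge^a\mathbb{R}^{a+b}$ and $y_i=v_{B_i}\in\bigwedge^b\mathbb{R}^{a+b}$. The wedge product $x\wedge y$ lands in $\bigwedge^{a+b}\mathbb{R}^{a+b}\cong\mathbb{R}$. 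We have $x_i\wedge y_j=\pm v_{A_i\cup B_j}$ if $A_i\cap B_j=\emptyset$ and $0$ otherwise. When $A_i\cap B_j=\emptyset$, the set $A_i\cup B_j$ has exactly $a+b$ elements, so by general position $x_i\wedge y_j\neq0$.

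Hence the $m\times m$ matrix $M=(x_i\wedge y_j)$ satisfies $M_{ii}\neq 0$, because $A_i\cap B_i=\emptyset$, and $M_{ij}=0$ for $i<j$, by the skew condition. So $M$ is lower triangular with nonzero diagonal and is nonsingular.

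It remains to show the $x_i$ are linearly independent. Suppose $\sum_i c_ix_i=0$ with not all $c_i$ zero, and let $k$ be the largest index with $c_k\neq0$. Wedge the relation with $y_k$. Every term with $i<k$ vanishes since $M_{ik}=0$, and every term with $i>k$ has $c_i=0$. This leaves $c_kM_{kk}=0$, a contradiction. Thus $x_1,\dots,x_m$ are linearly independent in $\bigwedge^a\mathbb{R}^{a+b}$, a space of dimension $\binom{a+b}{a}$. Therefore $m\leq\binom{a+b}{a}$.

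The only delicate point is the general position requirement. It is needed to guarantee $x_i\wedge y_j\neq0$ exactly when $A_i\cap B_j=\emptyset$, and this is where the padding to exact sizes $a$ and $b$ matters, since it makes $|A_i\cup B_j|=a+b$ in that case. The rest is the triangular-matrix independence argument.
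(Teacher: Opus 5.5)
Your proof is correct. It is the standard exterior-algebra argument (vectors in general position, $x_i=v_{A_i}$, $y_j=v_{B_j}$, a triangular matrix of wedge products $x_i\wedge y_j$), which is the linear algebraic method of Frankl and Kalai that the paper cites for this theorem without reproving it. You also handle properly the one point that needs care, padding to $|A_i|=a$ and $|B_i|=b$ so that disjoint $A_i,B_j$ give exactly $a+b$ independent vectors.
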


In this paper, we examine the maximum sizes of the partial ground sets $\bigcup_{i=1}^{m} A_i$ and $\bigcup_{i=1}^{m} B_i$ for skew Bollob\'{a}s systems $\mathcal{P}=\{(A_i,B_i):1\leq i\leq m\}$. Let

\begin{align*}
S_1(a,b)=&\max\{|\bigcup_{i=1}^{m} A_i|:	\{(A_i,B_i): 1\leq i\leq m\} \ \text{is a skew Bollob\'{a}s}\\
& \ \ \ \ \ \ \ \ \ \ \ \text{ system with } |A_i|\leq a,\ |B_i|\leq b \text{ for } i\in [m]\},
\end{align*}
and
\begin{align*}
S_2(a,b)=&\max\{|\bigcup_{i=1}^{m} B_i|:	\{(A_i,B_i): 1\leq i\leq m\} \ \text{is a skew Bollob\'{a}s}\\
& \ \ \ \ \ \ \ \ \ \ \ \text{ system with } |A_i|\leq a,\ |B_i|\leq b \text{ for } i\in [m]\}.
\end{align*}
The concept of $S_1(a,b)$ was proposed by Gerbner and Patk\'{o}s \cite[Page 68]{gp}, who referred to it as $n^{\prime}_1(a,b)$. To our knowledge, no literature has established the exact value of $S_1(a,b)$.

Given two real valued functions $f$ and $g$ of several variables, write $f\ll g$ if there exists a positive constant $C$ such that $f\leq Cg$ for all possible values of the variables. Write $f\asymp g$ if $f\ll g$ and $g\ll f$. The following asymptotics of $S_2(a,b)$ was determined by Calbet \cite[Theorem 27]{c}, where $S_2(a,b)$ was referred to as $u_s(a,b)$.

\begin{Theorem}{\rm \cite{c}}\label{known-T}
For all integers $a\geq 0$ and $b\geq 1$, $S_2(a,0)=0$ and $S_2(a,b)\asymp \binom{a+b+1}{a+1}$.
\end{Theorem}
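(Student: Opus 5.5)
The first claim is immediate: if $b=0$ then every $B_i$ is empty, so $\bigcup_i B_i=\emptyset$ and $S_2(a,0)=0$. For the asymptotic statement I would aim for the sharper identity $S_2(a,b)=\binom{a+b+1}{a+1}-1$, proving the lower and upper bounds separately.

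\textbf{Lower bound.} I would prove the recursion
$$S_2(a,b)\geq S_2(a,b-1)+S_2(a-1,b)+1 \qquad (a,b\geq 1)$$
by an explicit gluing. Take an optimal system $\mathcal{P}_1$ for $(a,b-1)$ on a ground set $X$, an optimal system $\mathcal{P}_2$ for $(a-1,b)$ on a disjoint ground set $Y$, and a new point $z$. Form the list consisting first of the pairs $(A\cup\{z\},B)$ for $(A,B)\in\mathcal{P}_2$, and then of the pairs $(A,B\cup\{z\})$ for $(A,B)\in\mathcal{P}_1$. The checks are as follows.
\begin{itemize}[nosep]
\item The size bounds $|A|\leq a$ and $|B|\leq b$ hold throughout.
\item Each pair stays disjoint, since $z$ lies only on the $A$-side in the first block and only on the $B$-side in the second.
\item The skew condition holds inside each block because it held in $\mathcal{P}_1$ and $\mathcal{P}_2$.
\item Across the blocks, every first-block $A$ and every second-block $B$ both contain $z$.
\end{itemize}
The union of the $B$'s is then $(\bigcup_{\mathcal{P}_1}B)\cup(\bigcup_{\mathcal{P}_2}B)\cup\{z\}$. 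For the base cases, $S_2(a,0)=0$, and $S_2(0,b)\geq b$ using the single pair $(\emptyset,[b])$. Pascal's rule then gives $S_2(a,b)\geq\binom{a+b+1}{a+1}-1$.

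\textbf{Upper bound.} I would reduce to Theorem~\ref{skew-number} by building, from a skew system $\mathcal{P}$ with $U=\bigcup_i B_i$, a new skew system whose size is at least $|U|$ and whose parameters are slightly shifted. For $x\in U$ let $j(x)$ be the largest index with $x\in B_{j(x)}$, and set
$$P_x=\bigl(A_{j(x)}\cup\{x\},\,B_{j(x)}\setminus\{x\}\bigr).$$
Order the $P_x$ by $j(x)$, breaking ties arbitrarily. Then:
\begin{itemize}[nosep]
\item Each $P_x$ is a disjoint pair with sizes at most $(a+1,b)$.
\item For a tie $j(x)=j(y)$ with $x$ placed before $y$, we have $x\in B_{j(y)}\setminus\{y\}$, so the skew condition holds.
\item For $j(x)<j(y)$, the condition reduces to $A_{j(x)}\cap(B_{j(y)}\setminus\{y\})\neq\emptyset$.
\end{itemize}
If this works, Theorem~\ref{skew-number} gives $|U|\leq\binom{a+b+1}{a+1}$, which is already enough for $\ll$.

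\textbf{Main obstacle.} The last condition fails exactly when $A_{j(x)}\cap B_{j(y)}=\{y\}$. I expect this to be the crux. I would first try to remedy it by adding an extra element, and second by replacing the set-pair reduction with a direct run of the Frankl--Kalai exterior-algebra argument. In that version one would assign to each $x\in U$ a generic wedge product of degree $a+1$ built from $A_{j(x)}$ together with $x$, and show these are linearly independent in a space of dimension $\binom{a+b+1}{a+1}$. The triangularity would come from the same ordering by $j(x)$, and the $-1$ would come from one extra independent vector, for instance the one attached to the empty pair.
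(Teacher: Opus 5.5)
\textbf{The lower bound and $S_2(a,0)=0$ are fine; the upper bound has a real gap.}

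Your argument for $S_2(a,0)=0$ is correct, and so is your lower bound. The gluing is exactly the construction in the paper's Lemma~\ref{inequality2}, read off on the $B$-side. This gives the $S_2$ recursion directly instead of going through the reversal $S_2(a,b)=S_1(b,a)$ of Lemma~\ref{S-and-T}. Run the induction with systems attaining the inductive bound rather than ``optimal'' ones, since finiteness of $S_2$ is not yet available at that point.

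The upper bound, however, is missing, and the obstacle you flag is not a corner case that can be patched:
\begin{itemize}
\item Your pairs actually satisfy $|B_{j(x)}\setminus\{x\}|\leq b-1$, not just $\leq b$. So if $\{P_x\}$ were skew, Theorem~\ref{skew-number} would give $|U|\leq\binom{a+b}{a+1}$.
\item That contradicts your own lower bound $\binom{a+b+1}{a+1}-1=\binom{a+b}{a+1}+\binom{a+b}{a}-1$ whenever $a,b\geq 1$.
\item Already the system $(\{1\},\{2\}),(\{3\},\{1\})$ yields $P_2=(\{1,2\},\emptyset)$ and $P_1=(\{1,3\},\emptyset)$, which is not skew.
\end{itemize}
So the configuration $A_{j(x)}\cap B_{j(y)}=\{y\}$ is forced on every system with $|U|>\binom{a+b}{a+1}$. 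The exterior-algebra variant inherits the same problem. Triangularity requires $\alpha_x\wedge\beta_y=0$ for $x$ before $y$, which encodes the same intersections, unless you change the $B$-sides in some way you have not specified.

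Nor does a crude count rescue $\ll$. The bound $|U|\leq bm\leq b\binom{a+b}{a}$ differs from $\binom{a+b+1}{a+1}$ by the factor $b(a+1)/(a+b+1)$, which is unbounded.

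\textbf{The missing idea: layered peeling that never shrinks the untouched side.} The paper works with $S_1(a,b)$ and then transfers by reversal.
\begin{itemize}
\item After padding to $|A_i|=a$ and $|B_i|=b$, pick a minimal $M_1\subseteq[m]$ with the same union of $A$-sides.
\item Delete from each $A_i$, $i\in M_1$, a private element $x_i^{(1)}$.
\item Pick a minimal $M_2\subseteq M_1$ covering what is left, and so on. This gives $|\bigcup_i A_i|=\sum_{j=1}^{a}|M_j|$.
\end{itemize}
Skewness passes from one layer to the next by a repair. Suppose $u<v$ in $M_{j+1}$ and $A_u^{(j)}\cap B_v^{(j)}=\emptyset$. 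Then the deleted private element $x_u^{(j)}$ lay in $B_v$, and one replaces it there by the next private element $x_u^{(j+1)}\in A_u^{(j)}\setminus A_v^{(j)}$.

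This is exactly your failing configuration, but the repair has three properties your map lacks:
\begin{itemize}
\item $|B_v|$ stays $b$;
\item $B_v$ stays disjoint from $A_v^{(j)}$;
\item no other intersection is affected, since $x_u^{(j)}$ was private.
\end{itemize}
Applying Theorem~\ref{skew-number} to each layer, with parameters $(a-j+1,b)$, and summing gives $\sum_{k=1}^{a}\binom{k+b}{k}=\binom{a+b+1}{a}-1$. Hence $S_2(a,b)=S_1(b,a)\leq\binom{a+b+1}{a+1}-1$. In other words, instead of a single application of Theorem~\ref{skew-number}, you need $b$ of them. In your orientation these have parameters $(a,k)$ for $k=1,\ldots,b$, and they are summed by the hockey-stick identity.
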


In this work, we establish the exact values of $S_1(a,b)$ and $S_2(a,b)$.
	
\begin{Theorem}\label{main-rssult}
Let $a$ and $b$ be non-negative integers. Then
$$S_1(a,b)=\binom{a+b+1}{a}-1.$$
\end{Theorem}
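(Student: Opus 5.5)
The proof has two parts: a construction showing $S_1(a,b)\geq\binom{a+b+1}{a}-1$, and a matching upper bound. Both would be organised around the Pascal-type identity
$$\binom{a+b+1}{a}-1=\Bigl(\binom{a+b}{a}-1\Bigr)+\Bigl(\binom{a+b}{a-1}-1\Bigr)+1,$$
which corresponds to the recursion
$$S_1(a,b)\leq S_1(a,b-1)+S_1(a-1,b)+1 .$$
The boundary values are direct checks:
\begin{itemize}
\item $S_1(0,b)=0$ is trivial, since every $A_i$ is empty.
\item $S_1(a,0)=a$: with every $B_j=\emptyset$ the condition $A_i\cap B_j\neq\emptyset$ for $i<j$ forces $m\leq 1$, so the union is a single set of size at most $a$.
\end{itemize}
Both agree with $\binom{a+b+1}{a}-1$, so induction on $a+b$ reduces everything to the recursion and a construction realising it.

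\textbf{Lower bound.} I would build the system recursively, mirroring the recursion. Take an extremal system $\mathcal{P}'$ for $(a-1,b)$, an extremal system $\mathcal{P}''$ for $(a,b-1)$ on a disjoint ground set, and one fresh element $z$.
\begin{enumerate}
\item Add $z$ to every $A$-set of $\mathcal{P}'$.
\item Add $z$ to every $B$-set of $\mathcal{P}''$.
\item List the pairs of $\mathcal{P}'$ first and those of $\mathcal{P}''$ afterwards.
\end{enumerate}
Within each block the skew property is inherited. Across blocks, for $i$ in the first block and $j$ in the second, we have $z\in A_i\cap B_j$. The pairs stay disjoint because $z$ lies only on the $A$-side in the first block and only on the $B$-side in the second. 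The set sizes are at most $a$ and $b$, and the union of the $A$-sets has size $S_1(a-1,b)+S_1(a,b-1)+1$. This proves the lower bound by induction.

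\textbf{Upper bound.} I would try to invert this construction. Given a skew system with $m\geq 2$, $B_m\neq\emptyset$ (else $m=1$), so choose $z\in B_m$. Split the indices into $I=\{i: z\in A_i\}$ and $J=\{i:z\notin A_i\}$; note $m\in J$.
\begin{itemize}
\item On $I$: the pairs $(A_i\setminus\{z\},B_i)$ still form a skew system. Indeed, for $i<j$ in $I$, $z\in A_j$ gives $z\notin B_j$, so the intersection $A_i\cap B_j$ avoids $z$. The $A$-sets now have size at most $a-1$, so $\bigl|\bigcup_{i\in I}A_i\bigr|\leq S_1(a-1,b)+1$.
\item On $J$: the goal is to show $\bigl|\bigcup_{i\in J}A_i\bigr|\leq S_1(a,b-1)$, so that the $B$-sets effectively lose one element.
\end{itemize}

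The $J$ part is the main obstacle, because removing $z$ from the $B_j$ with $j\in J$ need not shrink them. My first attempt would be to choose $z$ more cleverly, for instance by an extremal or shifting choice such as taking $I$ as an initial segment. A second option is to process the pairs from the last one backwards and take $B_j'=B_j\setminus B_m$ for the pairs in $J$. A third is to replace the $B_j$ with $j\in J$ by subsets of size at most $b-1$ chosen to witness only the needed intersections with the $A_i$, $i\in J$, using that $\bigcup_{i\in J}A_i$ avoids $z$.

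If none of these direct reductions works, the fallback is a linear-algebraic weighting in the spirit of Theorem \ref{skew-number}. One would assign to each element $x$ of $\bigcup A_i$ a "witness" configuration and bound the number of distinct witnesses by $\binom{a+b+1}{a}-1$, for example via a skew system with parameters $(a,b+1)$ in which each new element yields a new pair, or via exterior-algebra vectors in general position in dimension $a+b+1$.
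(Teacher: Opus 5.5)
\textbf{Verdict: there is a genuine gap.} Your boundary values and your lower-bound construction are correct and coincide with the paper's (Lemma~\ref{inequality2} followed by induction). The upper bound, however, is never proved. Your $I$-part is fine, but the $J$-part carries the whole content of the recursion and is left open. Worse, for an arbitrary $z\in B_m$ the bound $|\bigcup_{i\in J}A_i|\leq S_1(a,b-1)$ that you want is false. For $(a,b)=(2,2)$ take
$A_1=\{w,x_1\}$, $B_1=\emptyset$;
$A_2=\{w,x_2\}$, $B_2=\{x_1\}$;
$A_3=\{w,x_3\}$, $B_3=\{x_1,x_2\}$;
$A_4=\{p,q\}$, $B_4=\{w,z\}$.
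This is a skew Bollob\'{a}s system and $z$ lies in no $A_i$. So $J=\{1,2,3,4\}$ and $|\bigcup_{i\in J}A_i|=6>5=S_1(2,1)$. Choosing $z=w$ would have worked here, so what you actually need is a lemma guaranteeing that a good $z$ always exists. None of your three suggestions is developed into one. For instance, $B_j\setminus B_m$ need not be smaller than $B_j$ (as $B_3$ above shows), and it can destroy intersections $A_i\cap B_j$ lying in $B_m\setminus\{z\}$. In the third option nothing forces $b-1$ witnesses to suffice. The linear-algebraic fallback is a hope rather than an argument.

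The missing idea is the paper's: peel in the $a$-direction, and control the ``new'' elements by counting pairs rather than by bounding the $A$-union of a subfamily. After padding to $|A_i|=a$ and $|B_i|=b$, let $M_1\subseteq[m]$ be minimal with $\bigcup_{i\in M_1}A_i=\bigcup_{i=1}^{m}A_i$. Minimality gives each $i\in M_1$ a private element $x_i\in A_i$. Deleting these lowers the union by exactly $|M_1|$, and $|M_1|\leq\binom{a+b}{a}$ by Theorem~\ref{skew-number} applied to the subsystem on $M_1$. Now let $M_2\subseteq M_1$ be a minimal subfamily covering $\bigcup_{i\in M_1}(A_i\setminus\{x_i\})$. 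Suppose $(A_u\setminus\{x_u\})\cap B_v=\emptyset$ for some $u<v$ in $M_2$. Then $x_u$ was the only element of $A_u\cap B_v$, and you replace it in $B_v$ by an element of $A_u\setminus\{x_u\}$ that is private within $M_2$. This keeps $|B_v|=b$ and $B_v\cap(A_v\setminus\{x_v\})=\emptyset$, and it loses no other intersection, since $x_u$ lies in no stripped set. The repaired pairs on $M_2$ form a skew system with parameters $(a-1,b)$, so $S_1(a,b)\leq\binom{a+b}{a}+S_1(a-1,b)$. This unrolls to $\sum_{k=1}^{a}\binom{k+b}{k}=\binom{a+b+1}{a}-1$. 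Since $\binom{a+b}{a}=S_1(a,b-1)+1$, this is numerically your recursion. The difference is that the term you tried to obtain as the $A$-union of $J$ is obtained instead as the size of a minimal cover, where the Frankl--Kalai bound applies directly.
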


\begin{Theorem}\label{2nd-result}
Let $a$ and $b$ be non-negative integers. Then
$$S_2(a,b)=\binom{a+b+1}{a+1}-1.$$
\end{Theorem}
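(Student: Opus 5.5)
The plan is to deduce Theorem~\ref{2nd-result} from Theorem~\ref{main-rssult} by a duality that reverses the order of the pairs and swaps the roles of $A$ and $B$. No new extremal argument should be needed: the $B$-side problem ought to be exactly the $A$-side problem with the parameters $a$ and $b$ interchanged.

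\textbf{Step 1: the duality.} Given a skew Bollob\'{a}s system $\mathcal{P}=\{(A_i,B_i):1\leq i\leq m\}$, define $\mathcal{P}'=\{(A'_i,B'_i):1\leq i\leq m\}$ by
$$A'_i=B_{m+1-i},\qquad B'_i=A_{m+1-i}.$$
Each pair is still disjoint. For $1\leq i<j\leq m$ we have $m+1-j<m+1-i$, so
$$A'_i\cap B'_j=B_{m+1-i}\cap A_{m+1-j}\neq\emptyset$$
by the skew condition for $\mathcal{P}$ applied to the indices $m+1-j<m+1-i$. Hence $\mathcal{P}'$ is a skew Bollob\'{a}s system. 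Moreover $|A'_i|\leq b$ and $|B'_i|\leq a$ whenever $|A_i|\leq a$ and $|B_i|\leq b$, and
$$\bigcup_i A'_i=\bigcup_i B_i.$$

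\textbf{Step 2: equality of the extremal functions.} By Step 1, $S_2(a,b)\leq S_1(b,a)$. The map $\mathcal{P}\mapsto\mathcal{P}'$ is an involution, so applying the same construction in the other direction gives $S_1(b,a)\leq S_2(a,b)$. Therefore $S_2(a,b)=S_1(b,a)$.

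\textbf{Step 3: conclusion.} Theorem~\ref{main-rssult} holds for all non-negative integers, in particular with the roles of $a$ and $b$ exchanged. It gives
$$S_2(a,b)=S_1(b,a)=\binom{a+b+1}{b}-1=\binom{a+b+1}{a+1}-1.$$
This also agrees with Theorem~\ref{known-T}: for $b=0$ the formula gives $\binom{a+1}{a+1}-1=0=S_2(a,0)$.

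I expect no real obstacle beyond checking that the index reversal preserves the skew condition in the right direction. All the difficulty lies in Theorem~\ref{main-rssult}, which we are allowed to assume.
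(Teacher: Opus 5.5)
Your proposal is correct and follows the paper's argument exactly: the reversal-and-swap map $A'_i=B_{m+1-i}$, $B'_i=A_{m+1-i}$ is the paper's proof of $S_2(a,b)=S_1(b,a)$ (Lemma~\ref{S-and-T}), and the conclusion then follows from Theorem~\ref{main-rssult} via $\binom{a+b+1}{b}=\binom{a+b+1}{a+1}$. Your explicit check that the index reversal preserves the skew condition in the right direction is slightly more detailed than the paper's version.
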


\section{Proofs of Theorems \ref{main-rssult} and \ref{2nd-result}}

First of all, we establish the upper bound for $S_1(a,b)$.
		
\begin{Lemma}\label{upper bound}
Let $a$ and $b$ be positive integers. Then
$$S_1(a,b)\leq\binom{a+b+1}{a}-1.$$
\end{Lemma}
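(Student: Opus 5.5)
The plan is to embed the elements of $U=\bigcup_{i=1}^m A_i$ into a new skew Bollob\'{a}s system whose parameters are $(a,b+1)$, and then apply Theorem~\ref{skew-number}. That theorem gives at most $\binom{a+b+1}{a}$ pairs. So if the new system has $|U|$ pairs, plus one extra pair that can always be added, we get $|U|+1\leq\binom{a+b+1}{a}$, which is the claim. The bound has exactly the shape of Theorem~\ref{skew-number} with $b$ replaced by $b+1$, which is why I expect this route to work.

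\textbf{Construction.} For $x\in U$ let $f(x)=\min\{i: x\in A_i\}$ be the index where $x$ first appears. I group the elements of $U$ by $f(x)$ and to each $x$ attach the pair
$$P_x=(A_{f(x)}\setminus\{x\},\ B_{f(x)}\cup\{x\}).$$
Each such pair is disjoint and has sizes at most $a-1$ and $b+1$. Inside one group $\{x_1,\dots,x_k\}=A_j\setminus\bigcup_{i<j}A_i$ the skew condition holds automatically in any order, because for $s<t$ the element $x_t$ lies in both $A_j\setminus\{x_s\}$ and $B_j\cup\{x_t\}$. The groups are then ordered according to $j$. Finally I append one extra pair, for instance $(\emptyset,\cdot)$ at the end or a pair with empty $B$ at the front, which adds nothing to the bad intersections. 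I have not yet checked which of these two choices is consistent with the cross-group ordering below, so this needs verifying. That extra pair supplies the $-1$.

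\textbf{Main obstacle.} The hard step is the skew condition between different groups. For $j<j'$, $x$ with $f(x)=j$ and $y$ with $f(y)=j'$, we need
$$(A_j\setminus\{x\})\cap(B_{j'}\cup\{y\})\neq\emptyset.$$
Since $y\notin A_j$, this reduces to $A_j\cap B_{j'}\not\subseteq\{x\}$, and the hypothesis only guarantees $A_j\cap B_{j'}\neq\emptyset$. So the naive construction can fail. Reversing the order and swapping the roles of $A$ and $B$ gives an equivalent skew system, but it runs into the symmetric problem.

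\textbf{Possible fixes.} My first attempt would be to choose the removed/added element more cleverly, for example by removing from $A_{f(x)}$ a witness not lying in any later $B$. A second option is to run an induction on $m$, adding pairs one at a time and controlling how many new elements of $A_m$ can appear. If neither works, I would abandon the combinatorial embedding and redo the linear-algebraic proof of Theorem~\ref{skew-number} directly. The idea there is to attach to each $x\in U$ a wedge product or polynomial built from $A_{f(x)}$, $B_{f(x)}$ and $x$ in an exterior power of dimension $\binom{a+b+1}{a}$, and use the skew triangularity of the resulting matrix together with one forbidden vector to obtain the $-1$.
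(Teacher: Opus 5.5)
There is a genuine gap, and it goes beyond the cross-group step you leave open: no repair of this particular embedding can work. Each $P_x$ satisfies $|A_{f(x)}\setminus\{x\}|\leq a-1$ and $|B_{f(x)}\cup\{x\}|\leq b+1$. So if the pairs $P_x$, $x\in U$, formed a skew Bollob\'{a}s system in some order, Theorem~\ref{skew-number} with parameters $(a-1,b+1)$ would already give $|U|\leq\binom{a+b}{a-1}$. That is false. For $a=b=1$, every $P_x$ has empty $A$-set, and a skew system whose $A$-sets are all empty has at most one pair; yet the system $\{(\{x\},\{y\}),(\{z\},\{x\})\}$ has $|U|=2$. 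More generally, for all $a,b\geq 1$ the bound you are proving is attained (Lemma~\ref{inequality2} plus induction), and $\binom{a+b+1}{a}-1=\binom{a+b}{a-1}+\binom{a+b}{a}-1>\binom{a+b}{a-1}$.

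So the obstruction $A_j\cap B_{j'}=\{x\}$ genuinely occurs. Your first fix, a cleverer choice of the element moved from $A$ to $B$, cannot succeed, since it still yields one $(a-1,b+1)$-pair per element of $U$. The extra pair is not free either: a last pair $(\emptyset,D)$ needs $D$ to meet every earlier $A$-set, and a first pair $(C,\emptyset)$ needs $C$ to meet every later $B$-set. Your other two fixes are only named, so as it stands the proposal does not prove the bound.

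The missing idea is that $\binom{a+b+1}{a}-1=\sum_{k=1}^{a}\binom{k+b}{k}$ should come from $a$ separate applications of Theorem~\ref{skew-number} with parameters $(k,b)$, one for each ``layer'' of $U$, not from one application with $(a,b+1)$. The paper proceeds as follows.
\begin{enumerate}
\item Pad all sets so that $|A_i|=a$ and $|B_i|=b$.
\item Take $M_1\subseteq[m]$ minimal with $\bigcup_{i\in M_1}A_i=U$. From each $A_i$, $i\in M_1$, delete an element $x_i^{(1)}$ that is private to it within $M_1$.
\item Take $M_2\subseteq M_1$ minimal covering the union of the shrunken sets, delete private elements again, and continue for $a$ rounds.
\end{enumerate}
The deleted private elements are distinct and exhaust $U$, so $|U|=\sum_{j=1}^{a}|M_j|$.

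Skewness is carried from one layer to the next by a local swap. Suppose $u<v$ in $M_{j+1}$ and $A_u^{(j)}\cap B_v=\emptyset$. Then the deleted element $x_u^{(j)}$ must lie in $B_v$, and it is replaced there by $x_u^{(j+1)}\in A_u^{(j)}\setminus A_v^{(j)}$. Since $x_u^{(j)}$ lies in no remaining $A$-set, the swap destroys no other intersection and keeps $A_v^{(j)}\cap B_v=\emptyset$. Thus layer $j$ is a skew system with parameters $(a-j+1,b)$, which gives $|M_j|\leq\binom{a-j+1+b}{a-j+1}$. Summing yields the bound. The $-1$ appears as the absent $k=0$ term, not from an extra pair.
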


\begin{proof}
Without loss of generality, we assume that $\mathcal{P}=\{(A_i,B_i):1\leq i\leq m\}$ is a skew Bollob\'{a}s system with $|A_i|=a$ and $|B_i|=b$ for $i\in[m]$ such that $|\bigcup^{m}_{i=1}A_i|=S_1(a,b)$. Otherwise, if there is a skew Bollob\'{a}s system $\mathcal{P}'=\{(A_i',B_i'): 1\leq i \leq m\}$ with $|A_i'|\leq a$ and $|B_i'|\leq b$ for $i\in [m]$, we can always add new elements to every $A_i'$ and $B_i'$ to create a new skew Bollob\'{a}s system $\mathcal{P}''=\{(A_i'',B_i''): 1\leq i \leq m\}$ such that $|\bigcup^{m}_{i=1}A_i''|\geq|\bigcup^{m}_{i=1}A_i'|$.

Let $M_0=[m]$ and $\mathcal{P}_0=\mathcal{P}=\{(A^{(0)}_i,B^{(0)}_i): i\in M_0\}$ with $A^{(0)}_i=A_i$ and $B^{(0)}_i=B_i$ for $i\in M_0$. For $j=1,2,\ldots,a$, we define the index set $M_j\subseteq M_{j-1}$ and $A_i^{(j)}\subseteq A_i^{(j-1)}$ recursively (in increasing order of $j$) in the following way: if $1\leq j\leq a$, the index set $M_{j-1}$ and $A^{(j-1)}_i$ for every $i\in M_{j-1}$ are defined, then we let $M_{j}$ be a minimal subset of $M_{j-1}$ such that
$$\bigcup_{i\in M_{j}}A^{(j-1)}_i=\bigcup_{i\in M_{j-1}}A^{(j-1)}_i.$$
The minimality of $M_j$ implies that for every $i\in M_j$, there exists an element $x_i^{(j)}\in A_i^{(j-1)}$ such that $x_i^{(j)}\notin A_l^{(j-1)}$ for any $l\in M_j\setminus \{i\}$. Therefore we set
$$A_i^{(j)}=A_i^{(j-1)}\setminus\{x_i^{(j)}\}$$
for every $i\in M_j$. Note that $x_i^{(j)}\neq x_{i'}^{(j)}$ for any distinct $i,i'\in M_j$, and so we have
$$S_1(a,b)=\left|\bigcup^{m}_{i=1}A_i\right|=\sum_{j=1}^{a}|M_j|.$$

Now we shall choose appropriate $B^{(j-1)}_i$ such that $$\mathcal{P}_j=\{(A^{(j-1)}_i,B^{(j-1)}_i):i\in M_j\}$$
forms a skew Bollob\'{a}s system for every $j\in[a]$, by applying induction on $j$. The base case $\mathcal{P}_1=\{(A^{(0)}_i,B^{(0)}_i):i\in M_1\}$ is obviously a skew Bollob\'{a}s system. Suppose that for $1\leq j\leq a-1$, $\mathcal{P}_{j}=\{(A^{(j-1)}_i,B^{(j-1)}_i):i\in M_{j}\}$ is a skew Bollob\'{a}s system. Let $\mathcal{P}_{j+1}=\{(A^{(j)}_i,B^{(j)}_i):i\in M_{j+1}\}$ with $B^{(j)}_i=B^{(j-1)}_i$ for $i\in M_{j+1}$. If $\mathcal{P}_{j+1}$ forms a skew Bollob\'{a}s system, we are done. Otherwise, for any $u,v\in M_{j+1}(\subseteq M_{j})$, $u<v$, such that $A_u^{(j)}\cap B_v^{(j)}=\emptyset$, we describe a strategy below to adjust $B_v^{(j)}$.

The minimality of $M_j$ implies that there exists an element $x_u^{(j)}\in A_u^{(j-1)}$ for $u\in M_j$ such that $x_u^{(j)}\not\in A_l^{(j-1)}$ for any $l\in M_j\setminus\{u\}$. As $A_u^{(j)}=A_u^{(j-1)}\setminus\{x_u^{(j)}\}$, by the induction hypothesis, $x_u^{(j)}\in B_v^{(j-1)}$. Note that so far $B_v^{(j)}=B_v^{(j-1)}$, and so $x_u^{(j)}\in B_v^{(j)}$. The minimality of $M_{j+1}$ implies that there exists an element $x_u^{(j+1)}\in A_u^{(j)}$ such that $x_u^{(j+1)}\notin A_v^{(j)}$. Since $A_u^{(j)}\cap B_v^{(j)}=\emptyset$, $x_u^{(j+1)}\notin B_v^{(j)}$. We update $B_v^{(j)}$ by replacing its element $x_u^{(j)}$ with  $x_u^{(j+1)}$. As $M_{j+1}$ is finite for $j\in[a-1]$ the process will stop and we obtain a skew Bollob\'{a}s system $\mathcal{P}_{j+1}$.

Note that for every $0\leq j\leq a$, we have $|M_j|=|\mathcal{P}_j|$. Since $|A^{(j-1)}_i|=a-j+1$ and $|B^{(j-1)}_i|=b$ for $j\in[a]$ and $i\in M_j$, by Theorem \ref{skew-number}, we get $|\mathcal{P}_j|\leq \binom{a-j+1+b}{a-j+1}$, and hence
$$\sum_{j=1}^{a}|M_j|\leq\sum_{j=1}^{a}\binom{a-j+1+b}{a-j+1}=\sum_{k=1}^{a}\binom{k+b}{k}=\sum_{k=0}^{a}\binom{k+b}{k}-1=\binom{a+b+1}{a}-1.$$
This completes the proof.
\end{proof}

\begin{Lemma}\label{inequality2}
Let $a$ and $b$ be positive integers. Then
$$S_1(a,b)\geq S_1(a-1,b)+S_1(a,b-1)+1.$$
\end{Lemma}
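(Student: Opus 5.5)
We prove the inequality by a gluing construction. Take an optimal skew Bollob\'{a}s system $\mathcal{P}'=\{(A_i',B_i'):1\leq i\leq m'\}$ with $|A_i'|\leq a-1$, $|B_i'|\leq b$ and $|\bigcup A_i'|=S_1(a-1,b)$. Take a second optimal system $\mathcal{P}''=\{(A_i'',B_i''):1\leq i\leq m''\}$ with $|A_i''|\leq a$, $|B_i''|\leq b-1$ and $|\bigcup A_i''|=S_1(a,b-1)$. Relabel so that the two ground sets are disjoint, and introduce one fresh element $x$ lying in neither. The new system has $|A|\leq a$ and $|B|\leq b$, and its $A$-union is the disjoint union of the two old $A$-unions together with $\{x\}$. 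That union has size $S_1(a-1,b)+S_1(a,b-1)+1$, as required.

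Concretely, we put the pairs $(A_i'\cup\{x\},B_i')$ for $i\in[m']$ first, then the pairs $(A_i'',B_i''\cup\{x\})$ for $i\in[m'']$. Sizes are fine: $|A_i'\cup\{x\}|\leq a$ and $|B_i''\cup\{x\}|\leq b$. Disjointness within each pair holds because $x$ is fresh and the original pairs are disjoint.

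We then verify the skew condition by cases on the positions of the two pairs, taking $i<j$ in the new ordering.
\begin{itemize}[leftmargin=2em]
\item If both pairs come from $\mathcal{P}'$, then $A_i'\cap B_j'\neq\emptyset$ by hypothesis.
\item If both come from $\mathcal{P}''$, then $A_i''\cap B_j''\neq\emptyset$ by hypothesis, and adding $x$ to $B_j''$ does not hurt.
\item If the earlier pair comes from $\mathcal{P}'$ and the later from $\mathcal{P}''$, then $x\in (A_i'\cup\{x\})\cap(B_j''\cup\{x\})$.
\end{itemize}
No pair from $\mathcal{P}''$ precedes a pair from $\mathcal{P}'$, so no other condition needs checking. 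This is the whole reason for the ordering: the cross-intersections between blocks are required in only one direction, and $x$ supplies all of them.

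The one genuinely delicate point, and the step I expect needs care, is edge cases where a system may be empty or degenerate. When $a-1=0$ or $b-1=0$, one of $S_1(a-1,b)$, $S_1(a,b-1)$ refers to parameters outside the range of Lemma \ref{upper bound}. We only need that each $S_1$ is attained by some finite system, possibly with $m=0$ and value $0$, or with empty $A_i$. For example, $S_1(0,b)=0$, since all $A_i$ are empty and the skew condition forces $m\leq 1$.

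The construction works verbatim in these cases. If a system is empty it contributes nothing, and we can still add the single pair $(\{x\},\emptyset)$ to realise the ``$+1$''. For instance, if $\mathcal{P}'$ is empty we place $(\{x\},\emptyset)$ before $\mathcal{P}''$ augmented by $x$; it needs $x\in B_j''\cup\{x\}$, which holds.

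Thus the inequality holds for all positive $a,b$. Combined with Lemma \ref{upper bound} and Pascal's rule, it should give Theorem \ref{main-rssult} by induction.
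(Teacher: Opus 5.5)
Your proof is correct and is essentially the paper's own construction: the $(a-1,b)$-system with $x$ added to every $A$-side is placed before the $(a,b-1)$-system with $x$ added to every $B$-side, and the skew condition is checked by the same three cases. Your extra care about an empty $\mathcal{P}'$ (possible only when $a=1$) is a harmless refinement that the paper glosses over; it can also be avoided by taking the single pair $\{(\emptyset,\emptyset)\}$ as an optimal system for $S_1(0,b)$.
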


\begin{proof}
Let $\mathcal{P}_1=\{(A_{i,1},B_{i,1}):1\leq i\leq g_1\}$ be a skew Bollob\'{a}s system on $X_1:=\bigcup_{i=1}^{g_1}(A_{i,1}\cup B_{i,1})$ with $|A_{i,1}|\leq a-1$ and $|B_{i,1}|\leq b$ such that $|\bigcup_{i=1}^{g_1}A_{i,1}|=S_1(a-1,b)$. Let $\mathcal{P}_2=\{(A_{i,2},B_{i,2}):1\leq i\leq g_2\}$ be a skew Bollob\'{a}s system on $X_2:=\bigcup_{i=1}^{g_2}(A_{i,2}\cup B_{i,2})$ with $|A_{i,2}|\leq a$ and $|B_{i,2}|\leq b-1$ such that $|\bigcup_{i=1}^{g_2}A_{i,2}|=S_1(a,b-1)$. Assume that $X_1\cap X_2=\emptyset$. Take a new point $x\notin X_1\cup X_2$. Let
\begin{align*}
\mathcal{P}=&\{(A_{i,1}\cup\{x\},B_{i,1}):1\leq i\leq g_1\}\cup\{(A_{i,2},B_{i,2}\cup\{x\}):1\leq i\leq g_2\}.
\end{align*}
Then $\mathcal{P}=\{(C_{i},D_{i}):1\leq i\leq g_1+g_2\}$ is a skew Bollob\'{a}s system with $|C_i|\leq a$ and $|D_i|\leq b$. We order the pairs in $\mathcal{P}$ so that all pairs of the form $(A_{i,1}\cup\{x\},B_{i,1})$ precede those of the form $(A_{i,2},B_{i,2}\cup\{x\})$. More precisely, we index the pairs in $\mathcal{P}$ as $\{(C_i,D_i): 1 \leq i \leq g_1+g_2\}$ where
$$
(C_i,D_i) =
\begin{cases}
(A_{i,1}\cup\{x\},B_{i,1}) & \text{for } 1 \leq i \leq g_1;\\
(A_{i-g_1,2},B_{i-g_1,2}\cup\{x\}) & \text{for } g_1+1 \leq i \leq g_1+g_2.
\end{cases}
$$
Indeed, for any $1 \leq i < j \leq g_1$, we have $C_i \cap D_j = (A_{i,1}\cup\{x\}) \cap B_{j,1} \neq \emptyset$ since $\mathcal{P}_1$ is a skew Bollob\'{a}s system. For any $g_1+1 \leq i < j \leq g_1+g_2$, we have $C_i \cap D_j = A_{i-g_1,2} \cap (B_{j-g_1,2}\cup\{x\}) \neq \emptyset$ since $\mathcal{P}_2$ is also a skew Bollob\'{a}s system. Finally, for any $1 \leq i \leq g_1$ and $g_1+1 \leq j \leq g_1+g_2$, we have $C_i \cap D_j = (A_{i,1}\cup\{x\}) \cap (B_{j-g_1,2}\cup\{x\}) \neq \emptyset$, as both sets contain the element $x$. Note that $|\bigcup_{i=1}^{g_1+g_2}C_{i}|=S_1(a-1,b)+S_1(a,b-1)+1.$
\end{proof}
	
\begin{proof}[{\bf Proof of Theorem $\ref{main-rssult}$.}]
By the definition of $S_1(a,b)$, it is easy to see that $S_1(0,b)=0$ and $S_1(a,0)=a$. The upper bound follows from Lemma \ref{upper bound}. To establish the lower bound $$S_1(a,b) \geq \binom{a+b+1}{a}-1,$$
we apply induction on $a$ and $b$. The conclusion holds for the case $a=0$ and $b\geq 0$, and the case $b=0$ and $a\geq 0$.  Let us suppose that the statement holds for all $(a-1,b)$ and $(a,b-1)$ with $a\geq1$ and $b\geq1$. Apply the induction hypothesis and Lemma \ref{inequality2}. We have
\begin{equation}
\begin{aligned}
S_1(a,b)\geq & S_1(a-1,b)+S_1(a,b-1)+1\\ \nonumber
\geq &\binom{a-1+b+1}{a-1}-1+\binom{a+b-1+1}{a}-1+1\\ \nonumber
=&\binom{a+b}{a-1}+\binom{a+b}{a}-1\\ \nonumber
=&\binom{a+b+1}{a}-1.
\end{aligned}
\end{equation}
This completes the proof.
\end{proof}

\begin{Lemma}\label{S-and-T}
Let $a$ and $b$ be non-negative integers. Then $S_2(a,b)=S_1(b,a)$.
\end{Lemma}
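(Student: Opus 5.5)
The natural approach is a duality bijection: swap the roles of $A$ and $B$ and reverse the order of the pairs. Given a skew Bollob\'{a}s system $\mathcal{P}=\{(A_i,B_i):1\leq i\leq m\}$ with $|A_i|\leq a$ and $|B_i|\leq b$, I will define
$$\mathcal{P}^*=\{(A^*_i,B^*_i):1\leq i\leq m\},\qquad A^*_i=B_{m+1-i},\quad B^*_i=A_{m+1-i}.$$
The first step is to check that $\mathcal{P}^*$ is again a skew Bollob\'{a}s system, now with $|A^*_i|\leq b$ and $|B^*_i|\leq a$.

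Disjointness $A^*_i\cap B^*_i=B_{m+1-i}\cap A_{m+1-i}=\emptyset$ is immediate. For $1\leq i<j\leq m$, set $i'=m+1-j$ and $j'=m+1-i$. Then $i'<j'$, and
$$A^*_i\cap B^*_j=B_{j'}\cap A_{i'}=A_{i'}\cap B_{j'}\neq\emptyset$$
by the skew condition for $\mathcal{P}$. So the reversal of the order exactly compensates for the asymmetry in the definition, which requires $A_i\cap B_j\ne\emptyset$ only for $i<j$. This index reversal is the only point needing care; everything else is bookkeeping.

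Next, $\bigcup_i A^*_i=\bigcup_i B_i$. Taking $\mathcal{P}$ to be a system attaining $S_2(a,b)$, the system $\mathcal{P}^*$ is admissible for $S_1(b,a)$, and hence $S_2(a,b)\leq S_1(b,a)$.

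Conversely, the map $\mathcal{P}\mapsto\mathcal{P}^*$ is an involution: applying it twice returns the original system. So starting from a system attaining $S_1(b,a)$, with $|A_i|\leq b$ and $|B_i|\leq a$, its dual is admissible for $S_2(a,b)$ and has $\bigcup_i B^*_i=\bigcup_i A_i$. This gives $S_1(b,a)\leq S_2(a,b)$, and the two inequalities together prove the lemma.

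The boundary cases $a=0$ or $b=0$ need no separate treatment, since the argument never uses positivity of $a$ or $b$. The only thing to note is that the maxima are over finite, nonempty families (the empty system gives value $0$), so both quantities are well defined.
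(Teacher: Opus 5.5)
Your proof is correct and is the same argument as the paper's: the paper uses the identical reversal $A'_i=B_{m+1-i}$, $B'_i=A_{m+1-i}$ and gets the reverse inequality by symmetry, while you also write out the index check $i'=m+1-j<j'=m+1-i$ that the paper leaves implicit. One small quibble: the maxima exist because of the uniform bound $m\leq\binom{a+b}{a}$ from Theorem~\ref{skew-number}, not merely because each family is finite and nonempty; in any case your bijection shows the two sets of attainable values coincide, so the equality holds regardless.
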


\begin{proof}
Let $\{(A_i, B_i):1\leq i\leq m\}$ be a skew Bollob\'{a}s system with $|A_i|\leq a$ and $|B_i|\leq b$ for $i\in[m]$, such that $|\bigcup_{i=1}^{m} B_i|=S_2(a,b)$. Let $A'_i=B_{m+1-i}$ and $B'_i=A_{m+1-i}$ for $i\in [m]$. Then $\{(A'_i, B'_i): 1\leq i\leq m\}$ forms a skew Bollob\'{a}s system with $|A'_i|\leq b$ and $|B'_i|\leq a$ for $i\in [m]$, and so $|\bigcup_{i=1}^{m} A'_i|\leq S_1(b,a)$. Note that $|\bigcup_{i=1}^{m} B_i|=|\bigcup_{i=1}^{m} A'_i|$. It follows that $S_2(a,b)\leq S_1(b,a)$. Similarly, $S_1(b,a)\leq S_2(a,b)$.
\end{proof}

\begin{proof}[{\bf Proof of Theorem $\ref{2nd-result}$.}]
It follows from Theorem \ref{main-rssult} and Lemma \ref{S-and-T} that
$$S_2(a,b)=S_1(b,a)=\binom{b+a+1}{b}-1=\binom{a+b+1}{a+1}-1.$$
\end{proof}
	
\section{Concluding remarks}

We determine the exact values of $S_1(a,b)$ and $S_2(a,b)$ in this paper. A related concept was studied in \cite{fwf}. Specifically, let $n_{\text{skew}}(a, b)$ be the maximum size of the ground set of skew Bollob\'{a}s systems. That is, $n_{\text{skew}}(a,b)=\max\{|\bigcup_{i=1}^{m} (A_i\cup B_i)|: \{(A_i,B_i): 1\leq i\leq m\}$ is a skew Bollob\'{a}s system with $|A_i|\leq a$, $|B_i|\leq b$ for $i\in [m]\}$.
It was shown in \cite{fwf} that
$$
n_{\text{skew}}(a, b) = \binom{a+b+2}{a+1} - \binom{a+b}{a} - 1
$$
holds for all non-negative integers $a$ and $b$. We remark that $S_1(a,b)+S_2(a,b)\neq n_{\text{skew}}(a, b)$ for positive integers $a$ and $b$. Indeed, $S_1(a,b)+S_2(a,b)=\binom{a+b+1}{a}-1+\binom{a+b+1}{a+1}-1=\binom{a+b+2}{a+1}-2=n_{\text{skew}}(a, b)+\binom{a+b}{a}-1$.

A \emph{Bollob\'{a}s system} $\mathcal{P}=\{(A_i,B_i): 1\leq i\leq m\}$ is a collection of pairs of disjoint subsets of $[n]$ such that $A_i\cap B_j\ne\emptyset$ for any $1\leq i, j\leq m$ and $i\ne j$ (cf. \cite{B}). Similarly to $n_{\text {skew}}(a,b)$ and $S_1(a,b)$, Tuza \cite{tuza1985} introduced two related functions $n(a,b)=\max\{|\bigcup_{i=1}^{m} (A_i\cup B_i)|: \{(A_i,B_i): 1\leq i\leq m\}$ is a Bollob\'{a}s system with $|A_i|\leq a$, $|B_i|\leq b$ for $i\in [m]\}$ and $n'(a,b)=\max\{|\bigcup_{i=1}^{m} A_i|: \{(A_i,B_i): 1\leq i\leq m\}$ is a Bollob\'{a}s system with $|A_i|\leq a$, $|B_i|\leq b$ for $i\in [m]\}$. The lower and upper bounds for $n(a,b)$ and $n'(a,b)$ were investigated in \cite{m,np,tuza1985}. However, the exact values of $n(a,b)$ and $n'(a,b)$ are still unknown. The approach in this note does not seem applicable to deriving improved bounds for $n(a,b)$ and $n'(a,b)$.


\begin{thebibliography}{99}


\bibitem{B}
B.~Bollob\'{a}s, On generalized graphs, {\it Acta Math. Acad. Sci. Hungar.}, {\bf 16} (1965), 447--452.

\bibitem{c}
A.~Calbet, $K_r$-saturated graphs and the two families theorem, {\it Electron. J. Combin.}, {\bf 31} (2024), $\#$P4.68.

\bibitem{fwf}
Y.~Fang, X.~Wang and T.~Feng, A note on the maximum size of the ground set of skew Bollob\'{a}s systems, {\it Discrete Math.}, {\bf 348} (2025), 114650.
		
\bibitem{frankl}
P.~Frankl, An extremal problem for two families of sets, {\it European J. Comb.}, {\bf 3} (1982), 125--127.
		
\bibitem{gp}
D.~Gerbner and B.~Patk\'{o}s, {\it Extremal Finite Set Theory}, Chapman and Hall/CRC (2018).

\bibitem{kalai}
G.~Kalai, Intersection patterns of convex sets, {\it Israel J. Math.}, {\bf 48} (1984), 161--174.
		
\bibitem{m}
K.~Majumder, On the maximum number of points in a maximal intersecting family of finite sets, {\it Combinatorica}, {\bf 37} (2017), 87--97.

\bibitem{np}
Z.L.~Nagy and B.~Patk\'{o}s, On the number of maximal intersecting $k$-uniform families and further applications of Tuza's set pair method, {\it Electron. J. Comb.}, {\bf 22} (2015), $\#$P1.83.

\bibitem{tuza1985}
Z.~Tuza, Critical hypergraphs and intersecting set-pair systems, {\it J. Comb. Theory Ser. B}, {\bf 39} (1985), 134--145.		
\end{thebibliography}
\end{document}